\documentclass[11pt]{nyjm}

\usepackage{pdfsync}





\usepackage{amsmath,amsthm,amssymb,enumerate}
\usepackage{latexsym}
\usepackage{amscd}
\usepackage{amssymb}
\usepackage{enumitem}
\usepackage{mathtools}

\numberwithin{equation}{section}

\theoremstyle{plain}  
\newtheorem{thm}[equation]{Theorem}
\newtheorem{prop}[equation]{Proposition}

\newtheorem{lemma}[equation]{Lemma}

\theoremstyle{definition}  
\newtheorem{defn}[equation]{Definition}

\newtheorem{remark}[equation]{Remark}

\newtheorem{induction}[equation]{Induction}

\newcommand{\ra}{\rightarrow}

\newcommand{\lra}{\longrightarrow}

\newcommand{\zt}{{\mathbf Z}_2}

\newcommand{\cA}{\mathcal{A}}

 \newcommand{\iso} {\cong}

\begin{document}


\title
{The cohomology of the connective spectra for {K}-theory revisited}

\author{Donald M. Davis}
\address{Department of Mathematics, Lehigh University\\Bethlehem, PA 18015, USA}
\email{dmd1@lehigh.edu}
\author{W. Stephen Wilson}
\address{Department of Mathematics, Johns Hopkins University\\Baltimore, MD 01220, USA}
\email{wwilson3@jhu.edu}

\keywords{Adem relations, connective $K$-theory, Steenrod algebra}
\subjclass[2020]{55S10, 55R45, 55N15}
\thanks {Without Maple software, the Induction step \ref{intermed} would never have been found.}
\thanks {{We are grateful for the referee's very careful reading and many 
useful suggestions for improved exposition.}}

\begin{abstract} 
The stable mod 2 cohomologies of the spectra for connective real and complex
K-theories are well known and easy to work with.  However, the known bases
are in terms of the anti-automorphism of Milnor basis
elements.  We offer simple bases in terms of admissible sequences
of Steenrod operations that come from the Adem relations.  In particular,
{a basis for $H^*(bu)$ is given by those $Sq^I$ with $I$ admissible and no 
$Sq^1$ or $Sq^{2^n+1}$ appearing for $n > 0$.  }
\end{abstract}

\maketitle


\section{Introduction}\label{intro}

Our goal is to give simple 
bases for the mod 2 cohomologies, $H^*(bo)$ and $H^*(bu)$, 
for the connective real and complex K-theory spectra respectively.

Let $I=(i_1,i_2,\ldots,i_k)$.  
We let $Sq^I = Sq^{i_1} Sq^{i_2} \cdots Sq^{i_k}$ be a composition of 
Steenrod squares.
We have the length of $I${, given by} $\ell(I)=k$, and
the degree of $I${, given by} $|I|=|Sq^I|=\sum i_s$.
We say $I$ is {\em admissible} if $i_{s} \ge 2i_{s+1}$ for all $s$.
For $I$ admissible, we have the excess, $e(I)=i_1-i_2-\cdots -i_k$.
The admissible $Sq^I$ form the Serre-Cartan basis
for the mod 2 Steenrod algebra, $\cA$ {(\cite{SerreEM,Cartan})}.
Let $\cA_1$ 
be the sub-algebra generated by $Sq^1$
and $Sq^2$.  Let $E_1$ be the sub-algebra generated by $Q_0 = Sq^1$ and 
$Q_1 = Sq^1 Sq^2 + Sq^2 Sq^1$.

Let $\zt$ be the integers mod 2.
It has been known for a long time (\cite{Stong}) that 
$H^*(bo) = \cA\otimes_{\cA_1}\zt = \cA/\!\!/\cA_1$ and 
$H^*(bu) = \cA\otimes_{E_1}\zt = \cA/\!\!/E_1$. 
The usual basis for $H^*(bo)$ involves applying the anti-automorphism to
Milnor basis (\cite{MilA}) elements $Sq(R)$ with $R=(4r_1,2r_2,r_3,\ldots)$.  
One can also  extract an exotic basis for $H^*(bo)$ from
\cite{Dena} that is probably related to the spaces in the Omega spectrum
that the elements are created on.

We can now state our main theorem:


\begin{thm} \label{main}
{A basis for $H^*(bo)$ is given by all $Sq^I$ with $I$ admissible, no $i_s = 2^n+1$
for $n \ge 1 $ and $i_k \ge 4$. 
The case $H^*(bu)$ is the same except that $i_k \ge 2$.}
\end{thm}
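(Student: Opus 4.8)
The plan is to move everything into $\cA$ and split the problem into a dimension count and a spanning statement. First I would record two concrete presentations of the quotients: since $\cA_1$ is generated as an algebra by $Sq^1$ and $Sq^2$, and $E_1$ is exterior on $Q_0=Sq^1$ and $Q_1=Sq^1Sq^2+Sq^2Sq^1=Sq^3+Sq^2Sq^1$ with $\cA Q_0+\cA Q_1=\cA Sq^1+\cA Sq^3$ (using $\cA Sq^2Sq^1\subseteq\cA Sq^1$), one gets
\[
H^*(bo)=\cA/(\cA Sq^1+\cA Sq^2),\qquad H^*(bu)=\cA/(\cA Sq^1+\cA Sq^3).
\]
Because $Sq^3=Sq^1Sq^2\in\cA Sq^2$, the $bu$-relation ideal sits inside the $bo$-relation ideal, so there is a surjection $H^*(bu)\to H^*(bo)$ (complexification $bo\to bu$ on cohomology) whose kernel is the image of $\cA Sq^2$, and $\cA Sq^2$ kills every admissible with $i_k=2$. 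I would therefore prove the $bu$ statement first; the $bo$ statement then follows formally once the $bo$ dimension count is available, since $H^*(bo)$ becomes the quotient of $H^*(bu)$ by the span of the basis elements with $i_k=2$, which leaves precisely the admissibles with $i_k\ge4$ and no $i_s=2^n+1$.

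For the dimension count, the Milnor basis gives $\sum_n(\dim\cA^n)\,t^n=\prod_{i\ge1}(1-t^{2^i-1})^{-1}$, and since $\cA$ is free over each of the sub-Hopf-algebras $E_1$ (Poincar\'e series $(1+t)(1+t^3)$) and $\cA_1$ (Poincar\'e series $(1+t)(1+t^2)(1+t^3)$), one obtains
\[
\sum_n(\dim H^n(bu))\,t^n=\frac1{(1-t^2)(1-t^6)}\prod_{i\ge3}\frac1{1-t^{2^i-1}},
\]
and for $bo$ one divides additionally by $1+t^2$, i.e.\ replaces $(1-t^2)^{-1}$ by $(1-t^4)^{-1}$. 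It then remains to prove the purely combinatorial statement that admissible sequences with $i_k\ge2$ (resp.\ $i_k\ge4$) and no entry of the form $2^n+1$, $n\ge1$, are enumerated by exactly these series. I would do this by a generating-function argument organized around the strictly increasing partial sums $i_k<i_k+i_{k-1}<\cdots$ of an admissible sequence; I expect it to be routine, if somewhat fiddly.

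The heart of the proof is the spanning statement: I would show by induction that every admissible $Sq^I$ is congruent, modulo $\cA Sq^1+\cA Sq^3$, to a $\zt$-linear combination of allowed admissibles. The induction is on $|I|$, with an inner induction on a ``badness'' statistic --- say the position of the last index $i_s$ of the form $2^n+1$, then its value, refined lexicographically. If $i_k\in\{1,3\}$ the monomial already lies in the ideal; otherwise one attacks the worst bad entry by combining an Adem relation with a judiciously chosen element of $\cA Sq^1+\cA Sq^3$ (here the identity $Sq^{2^n+1}=Sq^1Sq^{2^n}$ and the explicit structure of $\cA Sq^3$ are what one needs), and then re-admissibilizes the resulting sum. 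The crux --- and where I expect the genuine difficulty, and the computer experimentation the authors credit, to lie --- is pinning down the exact rewriting formula and checking that after re-admissibilization no term is worse than $Sq^I$ in the chosen ordering, so that both inductions close; this is the intermediate identity on which the whole argument hinges. Once spanning is in hand, the matching dimension count forces linear independence, so the allowed admissibles form a basis. (One might instead hope to prove independence directly by pairing the allowed $Sq^I$ against the Milnor duals $H_*(bo)=\zt[\xi_1^4,\xi_2^2,\xi_3,\xi_4,\dots]$ and $H_*(bu)=\zt[\xi_1^2,\xi_2^2,\xi_3,\xi_4,\dots]$; but the obvious leading-term correspondence fails to be triangular --- for instance the allowed class $Sq^7$ is forced to pair with $\xi_3$ rather than with $\xi_1^7$ --- so that route looks no cheaper than the straightening.)
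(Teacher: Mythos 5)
Your overall architecture (a spanning statement plus a linear--independence statement) matches the paper's, and you have correctly located where the real work lies --- but you have not done that work. The entire technical content of the paper is the rewriting machinery you defer: Proposition \ref{basic} (a sharp bound on the leading admissible index produced when $Sq^a$ is multiplied into $T_b$, proved by pushing everything into $H^*(MO)$ and tracking leading Stiefel--Whitney monomials via Thom's Lemma \ref{thom}), Proposition \ref{An} (the seed identity $Sq^{m+1}\in T_m$ in $\cA/\!\!/\cA_n$ when $\alpha(m)\le n$, proved by a binary--expansion induction through the Adem relation $Sq^aSq^{a+b+1}=Sq^{2a+b+1}+\Delta$), and above all Lemma \ref{stringk} with its Induction \ref{intermed}, which is precisely the ``intermediate identity on which the whole argument hinges'' that you say you expect to exist. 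Asserting that one can ``attack the worst bad entry by combining an Adem relation with a judiciously chosen element of $\cA Sq^1+\cA Sq^3$'' and that both inductions will close is a statement of hope, not a proof; the authors themselves credit computer experimentation for finding the identity that makes the induction close. So the proposal has a genuine gap at its center.

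Two further comments on the parts you do sketch. First, your independence argument via Poincar\'e series would require you to actually enumerate admissible sequences with no entry $2^n+1$ and $i_k\ge 2$ (resp.\ $i_k\ge 4$) and match the claimed product formula; that is an additional nontrivial combinatorial lemma which you only assert to be ``routine, if somewhat fiddly,'' whereas the paper gets independence essentially for free from geometry: the cyclic $\cA$-module on the Thom class of $MSpin$ is $\cA/\!\!/\cA_1$, the classes $w_{2^n+1}$ are decomposable in $H^*(BSpin)$ (Thomas), and Thom's leading--term lemma then makes the allowed $Sq^I$ visibly independent in $H^*(MSpin)$. Second, your direction of deduction between $bo$ and $bu$ is reversed from the paper's (which uses the fibration $bo\to bu\to\Sigma^2 bo$ to pass from $bo$ to $bu$); your version needs the further check that the kernel of $H^*(bu)\to H^*(bo)$ is spanned exactly by the allowed admissibles ending in $2$, which again leans on the unproved counts. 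None of this is fatal in principle, but as written the proposal establishes neither spanning nor independence.
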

Along the way we needed some things about the Steenrod algebra
that may be of independent interest.

\begin{defn}
Let $T_b \subset \cA$ be the span of all admissible $Sq^I$ with $i_1 \le b$.
\end{defn}

\begin{prop}
\label{basic}
$Sq^a\, T_b \subset T_{n}$ where\\
\[
n=\begin{cases}
a&\text{if\ \  }a\ge {2}b\\
2b-1&\text{if\ \  }2b>a\ge b\\
a+b&\text{if\ \  }b>a>0.
\end{cases}
\]
\end{prop}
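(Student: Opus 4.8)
The plan is to induct on the length $\ell(I)$ of admissible sequences $I=(i_1,\dots,i_k)$ with $i_1\le b$, reducing $Sq^a Sq^I$ to a sum of admissibles via a single application of the Adem relation to the front pair $Sq^a Sq^{i_1}$ and then handling the resulting tail inductively. The base case $k=0$ (i.e. $Sq^a$ on the unit) is trivial since $Sq^a\in T_a\subseteq T_n$ in all three ranges. For the inductive step, write $Sq^a Sq^I = (Sq^a Sq^{i_1})Sq^{i_2}\cdots Sq^{i_k}$. If $a\ge 2i_1$ the pair $Sq^aSq^{i_1}$ is already admissible and $Sq^aSq^I\in T_a$; since $i_1\le b\le a/2 < a$ this is the first case ($a\ge 2b$, so $n=a$) — but one must be careful: $a\ge 2b$ does not force $a\ge 2i_1$ only when $i_1$ could be as small as we like, and indeed $a\ge 2b\ge 2i_1$, so we land in $T_a=T_n$. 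When $a<2i_1$ (which is automatic in the second and third cases, and can also happen in the first if $b>i_1$ but then still $a\ge 2b>2i_1$ is false — so actually $a<2i_1\le 2b\le a$ is contradictory, meaning in case one we always have $a\ge 2i_1$; the genuinely interesting cases are two and three), we invoke the Adem relation
\[
Sq^aSq^{i_1}=\sum_{c}\binom{i_1-1-c}{a-2c}Sq^{a+i_1-c}Sq^{c},
\]
valid for $a<2i_1$, where $c$ ranges over $0\le c\le \lfloor a/2\rfloor$ and in particular $c\le a/2 < i_1\le b$. Thus each tail term $Sq^cSq^{i_2}\cdots Sq^{i_k}$ has its leading index $c\le b$... but more sharply $c\le a/2$, so by the inductive hypothesis applied to $Sq^{i_2}\cdots Sq^{i_k}\in T_{i_1/2}\subseteq T_{b}$ — wait, we need the tail's excess index, namely $i_2\le i_1/2 \le b/2$ — the product $Sq^cSq^{i_2}\cdots Sq^{i_k}$ lies in $T_{m}$ for an $m$ we control, and then we must bound $Sq^{a+i_1-c}$ applied to that.

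More precisely, the key estimates are: the outer square $Sq^{a+i_1-c}$ has degree between $a+i_1-\lfloor a/2\rfloor\ge a+i_1-a/2 = a/2+i_1$ and $a+i_1$; and the tail $Sq^cSq^{i_2}\cdots Sq^{i_k}$, with $c\le a/2$ and $i_2\le i_1/2\le b/2 < b$, lies in $T_{c'}$ with $c'\le \max(c, \text{something} < 2i_2, \dots)$. The real content is to run the three-case bound recursively so that the bookkeeping closes: one shows that when $Sq^{a+i_1-c}$ meets a tail living in $T_{c'}$, the resulting $T$-index is at most $n$ as defined, using that $a+i_1-c$ is comparatively large (it is $\ge a/2$ and $\ge i_1$) while $c'$ is comparatively small. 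The three cases in the conclusion correspond precisely to the three branches of the Adem-driven recursion: $a\ge 2b$ is the already-admissible branch giving $n=a$; $2b>a\ge b$ is the branch where the dominant contribution comes from outer indices near $a+i_1-c$ with the $-c$ absorbing into the $2b-1$ via $a+i_1-c\le a+b-0$ adjusted by admissibility constraints forcing strict inequality, yielding $n=2b-1$; and $b>a>0$ gives outer index at most $a+i_1\le a+b$, hence $n=a+b$.

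The main obstacle I anticipate is the careful verification that the recursion does not leak outside $T_n$ — specifically, controlling the leading index of $Sq^{a+i_1-c}\cdot(\text{tail})$ when the tail itself, after applying the inductive hypothesis, is a sum of admissibles whose leading indices range up to its own bound $m$, and then $a+i_1-c$ may be smaller than $2m$, triggering \emph{another} Adem relation whose output must still be shown to lie in $T_n$. One handles this by noting $a+i_1-c \ge i_1 \ge$ twice the leading index of the tail (since that leading index is $\le i_1/2$, coming from admissibility $i_1\ge 2i_2$ together with $c\le i_1$... which needs $c\le i_2$, false in general — so one instead argues that even if a second Adem relation fires, its new outer index is $\ge (a+i_1-c)/2 \ge a/2$ and its new inner index has dropped, so a clean induction on $|I|+\ell(I)$ or on $a$ terminates). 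I would set up a single well-chosen induction (likely on the pair $(b,\ell(I))$ lexicographically, or on total degree) so that every Adem application strictly decreases the measure, making the three-case bound self-propagating; isolating that measure and checking the boundary arithmetic $2c\le a$, $c<i_1\le b$ against the three thresholds is where the real work lies, though it is elementary once the inductive scheme is fixed.
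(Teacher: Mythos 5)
Your strategy---apply the Adem relation to the leading pair $Sq^aSq^{i_1}$ and push the three-case bound through the resulting terms $Sq^{a+i_1-c}\bigl(Sq^cSq^{i_2}\cdots Sq^{i_k}\bigr)$ by induction---is genuinely different from the paper's. The paper does no Adem bookkeeping at all: it works in $H^*(MO)$, where Lemma \ref{thom} identifies the leading monomial of $Sq^J(U)$ for admissible $J$ as $Uw_{j_1}\cdots w_{j_k}$, and a single application of the Cartan formula plus the Wu formula for $Sq^iw_j$ shows that no Stiefel--Whitney class $w_m$ with $m>n$ can appear in $Sq^aSq^I(U)$; triangularity of Thom's lemma then forces every admissible $Sq^J$ occurring in $Sq^aSq^I$ to have $j_1\le n$. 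Your route can be made to work, but as written there is a genuine gap: the entire content of the proposition is exactly the ``boundary arithmetic'' that you defer (``where the real work lies, though it is elementary once the inductive scheme is fixed''), you never actually fix the inductive measure, and several of the estimates you float while exploring are wrong---e.g.\ the tail $Sq^cSq^{i_2}\cdots Sq^{i_k}$, once rewritten in admissibles, does not have leading index $\le i_1/2$ (that would require $c\ge 2i_2$), as you notice yourself and then do not repair. A proof that ends with ``the recursion does not leak outside $T_n$'' asserted rather than checked has not proved the proposition.

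To indicate what is actually required: reduce to $i_1=b$, and induct on $|I|$ with $a$ arbitrary. For a term $Sq^eSq^cSq^{I'}$ with $e=a+b-c$, $\max(0,a-b+1)\le c\le\lfloor a/2\rfloor$, $I'=(i_2,\dots,i_k)$, the inductive hypothesis (applicable since $c+|I'|<|I|$) puts $Sq^cSq^{I'}$ in $T_m$ with $m\le 2i_2-1$ if $c<2i_2$ and $m=c$ if $c\ge 2i_2$; in all cases $m\le\max(2i_2-1,c)\le b-1<e$, so a second use of the inductive hypothesis gives $Sq^eT_m\subset T_e$ or $T_{2m-1}$. One then verifies: $e\le a+b$ always, and $e\le 2b-1$ when $b\le a$ (using $c\ge a-b+1$); $2m-1\le 2b-3$ always, which settles the case $b\le a<2b$; and for $0<a<b$, either $c\ge i_2$, whence $m\le 2c-1\le a-1$ and $2m-1\le 2a-3<a+b$, or $c<i_2$, whence $m\le c+i_2$ and $2m-1\le a+b-1$. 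This case-check is the proof; without it (or the paper's Stiefel--Whitney argument, which replaces it entirely) the proposal is only a plan.
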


\begin{remark}
Note that it is always true that $Sq^a T_b \subset T_{a+b}$.
The way we eliminate the $Sq^{2^n+1}$ is as follows.  We consider
$J=(2^{n+1}+1,i_0,\ldots,i_k)$ admissible. If $Sq^J$ is
non-zero in  
$\cA/\!\!/\cA_1$,
we can write it as a sum of $Sq^K$ with $K$ admissible and $k_1 \le 2^{n+1}$.
Although we don't need it in this paper, we also show that $e(J) > e(K)$
for every such $K$.
\end{remark}

The subalgebra $\cA_n$ of $\cA$ is generated by $Sq^1, Sq^2, Sq^4, \cdots, Sq^{2^n}$.
As usual, let $\alpha(n)$ be the number of ones in the binary expansion of $n$.
{We had a brief hope that a basis for $\cA/\!\!/\cA_n$ would be given by
$Sq^I$ with $I$ admissible, $i_k \ge 2^{n+1}$ and with no $i_s$ with $\alpha(i_s-1)\le n $.
Unfortunately it was false already in degree 49 for $\cA_2$.  
The anti-automorphism of the Milnor element $Sq(8,4,2,1)$ is non-zero in degree 49.
However, a short calculation shows that the
suggested conjecture has no elements in degree 49.
}

One observation survived:

\begin{prop}
\label{An}
In $\cA/\!\!/\cA_n$, if $\alpha(m) \le n$, then $Sq^{m+1} \in T_m$.
\end{prop}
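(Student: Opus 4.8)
The plan is to realise the image of $Sq^{m+1}$ in $\cA/\!\!/\cA_n$ as a combination of admissibles of first exponent $\le m$ by exhibiting a single explicit element of the kernel $K=\cA\cdot\overline{\cA_n}$ (where $\overline{\cA_n}$ is the augmentation ideal of $\cA_n$) whose Serre--Cartan expansion is $Sq^{m+1}$ together with terms in $T_m$. Since $Sq^{m+1}$ is the only admissible of degree $m+1$ with first exponent $m+1$, once such a $u\in K$ is found we can write $Sq^{m+1}=(Sq^{m+1}-u)+u$ with $Sq^{m+1}-u\in T_m$ and $u\in K$, which is precisely the assertion $Sq^{m+1}\in T_m$ in $\cA/\!\!/\cA_n$. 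The obvious candidate — peeling a generator $Sq^{2^i}$ off on the \emph{right}, since $\cA/\!\!/\cA_n=\cA\otimes_{\cA_n}\zt$ — produces the wrong coefficient of $Sq^{m+1}$ as soon as $m+1\ge 2^{n+1}$, so I would instead peel $Sq^{2^j}$ off on the \emph{left} for a carefully chosen $j\le n$ and reduce with the Adem relations.

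For the choice of $j$: the hypothesis $\alpha(m)\le n$ forces at least one of the $n+1$ powers $2^0,\dots,2^n$ to be absent from the binary expansion of $m$, and I would take $j=v_2(m+1)$, the number of trailing $1$'s of $m$, which is $\le\alpha(m)\le n$ and for which bit $j$ of $m$ is $0$. If $m+1=2^j$ we are done at once, since $Sq^{m+1}=Sq^{2^j}\in\overline{\cA_n}\subseteq K$; otherwise $m\ge 2^{j+1}$, so $2^j<2(m+1-2^j)$ and the Adem relation rewrites $Sq^{2^j}\,Sq^{m+1-2^j}$ as a sum of admissibles whose unique length-one term is $\binom{m-2^j}{2^j}\,Sq^{m+1}$. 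The crux — and the place the choice of $j$ is used — is that this coefficient is odd: bit $j$ of $m$ is $0$ while $m$ has a $1$ in a higher position, so subtracting $2^j$ borrows upward and leaves bit $j$ of $m-2^j$ equal to $1$, whence $\binom{m-2^j}{2^j}$ is odd by Lucas' theorem. Every remaining term of the Adem expansion is admissible with first exponent $<m+1$, hence lies in $T_m$, so $u:=Sq^{2^j}\,Sq^{m+1-2^j}$ satisfies $Sq^{m+1}-u\in T_m$.

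To see that $u\in K$ I would invoke the standard fact that $\cA_n$ is a normal sub-Hopf-algebra of $\cA$, so that $K=\cA\cdot\overline{\cA_n}=\overline{\cA_n}\cdot\cA$ is a two-sided ideal; since $2^j\le 2^n$ we have $Sq^{2^j}\in\overline{\cA_n}$, hence $u\in\overline{\cA_n}\cdot\cA\subseteq K$, completing the argument. I expect the real obstacle to be this passage to left multiplication: recognising that the correct power to split off is $2^{v_2(m+1)}$, that $\alpha(m)\le n$ is exactly what keeps this exponent $\le n$ \emph{and} makes the leading Adem coefficient odd, and that $Sq^{2^j}\,Sq^{m+1-2^j}$ may then be discarded because $\cA\cdot\overline{\cA_n}$ is two-sided. (If one prefers to avoid normality, an induction on $m$ also works: writing $m=2^k+m'$ with $2^k$ the top power of $m$, one gets $Sq^{m+1}\equiv Sq^{m'+1}\,Sq^{2^k}\pmod{T_m}$, applies the inductive hypothesis to $m'$ — note $\alpha(m')=\alpha(m)-1$ — and checks via Proposition~\ref{basic} that $T_{m'}\cdot Sq^{2^k}\subseteq T_m$; this is longer and still uses right-invariance of $\cA\cdot\overline{\cA_n}$.)
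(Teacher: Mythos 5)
Your Adem/Lucas computation (the choice $j=v_2(m+1)$, the oddness of $\binom{m-2^j}{2^j}$, and the fact that all other terms of $Sq^{2^j}Sq^{m+1-2^j}$ lie in $T_m$) is correct, but the argument collapses at the one step that carries all the content: the claim that $u=Sq^{2^j}Sq^{m+1-2^j}$ lies in $K=\cA\cdot\overline{\cA_n}$. That claim rests on $\cA_n$ being normal in $\cA$, and this is false: $\cA\cdot\overline{\cA_n}$ is only a left ideal, and $\overline{\cA_n}\cdot\cA\not\subseteq\cA\cdot\overline{\cA_n}$. A two-line counterexample for $n=1$: the Adem relation gives $Sq^2Sq^4=Sq^6+Sq^5Sq^1$, and $Sq^6\ne 0$ in $\cA/\!\!/\cA_1=H^*(bo)$, so $Sq^2Sq^4\in\overline{\cA_1}\cdot\cA$ but $Sq^2Sq^4\notin\cA\cdot\overline{\cA_1}$; equivalently, left multiplication by $Sq^2$ is not zero on $\cA/\!\!/\cA_1$, which is exactly what ``$u\in K$'' would require. (This non-normality is also why the classical basis of $H^*(bo)$ is stated in terms of the \emph{anti-automorphism} of the $Sq(R)$ with $R=(4r_1,2r_2,r_3,\ldots)$: one must apply $\chi$ to pass between the right ideal $\overline{\cA_1}\cdot\cA$ and the left ideal that defines $\cA\otimes_{\cA_1}\zt$.)

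The failure is not merely in the justification; the membership itself is false for instances your proof needs. Take $n=1$, $m=16$ (so $\alpha(m)=1$ and $j=v_2(17)=0$). Then $u=Sq^1Sq^{16}=Sq^{17}$, your identity $Sq^{m+1}-u\in T_m$ degenerates to $0\in T_{16}$, and everything rides on $Sq^{17}\in\cA\cdot\overline{\cA_1}$, i.e.\ $Sq^{17}=0$ in $H^*(bo)$. But the Adem relation $Sq^8Sq^9=Sq^{17}+Sq^{16}Sq^1+Sq^{15}Sq^2+Sq^{13}Sq^4$, together with $Sq^9=0$ in $\cA/\!\!/\cA_1$ (degree $9$ of $H^*(bo)$ vanishes), gives $Sq^{17}=Sq^{13}Sq^4\ne 0$ in $\cA/\!\!/\cA_1$. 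So $u\notin K$ and the decomposition $Sq^{m+1}=(Sq^{m+1}-u)+u$ proves nothing; the parenthetical fallback leans on the same ``right-invariance of $\cA\cdot\overline{\cA_n}$'' and fails identically. The paper's proof avoids the issue by using only left multiplication: it inducts on $m$, writing $m=2a+b$ with $a$ the top block of consecutive binary digits of $m$ shifted down once, so that $\binom{a+b}{a}$ is odd and $Sq^aSq^{a+b+1}=Sq^{2a+b+1}+\Delta$ with $\Delta\in T_{2a+b}$, then applies the inductive hypothesis $Sq^{a+b+1}\in T_{a+b}$ and Proposition~\ref{basic} in the form $Sq^aT_{a+b}\subseteq T_{2a+b}$; the quotient by $\cA_n$ enters only through the base case $Sq^{2^\ell}=0$, which needs nothing more than the left ideal.
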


{
Our initial interest was in $tmf$ with $H^*(tmf)= 
\cA/\!\!/\cA_2$.
However, it was clear that not only was nothing known here, but
the same held true for $H^*(bo)$.  Calculations led to the conjecture
and eventually the theorem.  A conjecture for
$\cA/\!\!/\cA_2$ still eludes us.
}

We first prove the results about the Steenrod algebra.  
Then we apply these results to prove Theorem \ref{main}.

\section{Results on the Steenrod algebra}

We will make constant use of the Thom spectrum, $MO$, for the unoriented cobordism case.
From \cite{Thom} we know that $H^*(MO)$ is free over $\cA$ and that one copy of $\cA$
sits on the Thom class $U \in H^0(MO)$.
We need the Stiefel-Whitney (S-W) classes, $w_i \in H^i(BO)$, and the Thom isomorphism
$H^*(BO) \iso H^*(MO)$ that takes $w_i$ to $Uw_i$.  We need the connection
between the S-W classes and the Steenrod algebra {(\cite{WuRel})} given by
$Sq^n U = U w_n$ and
$$
Sq^i (w_j) = {\sum}_{t=0}^i \binom{j+t-i-1}{t} w_{i-t} w_{j+t}.
$$
Keep in mind that $Sq^n w_n = w_n^2$ and $Sq^i w_n = 0$ when $i > n$.

The cohomology, $H^*(BO)$, is a polynomial algebra on the S-W classes, \cite{MilSta}.
We put an order on the monomials.  We have $M < M'$ if the degree of $M'$ is
greater than that of $M$.  Next, if they have the same degree, the one with the
largest $w_n$ is greater.  If they have the same largest $w_n$, we go to the
next largest and so on.  To use Thom's examples from his paper:  $w_4 < w_4 w_1^2
< w_4 w_2 w_1 < w_4 w_3$.

\begin{lemma}[Thom, in the proof of II.8, \cite{Thom}]
\label{thom}
For $I=(i_1,i_2,\ldots,i_k)$ admissible, in $H^*(MO)$,
$$
Sq^I(U) = U ( w_{i_1} w_{i_2}\cdots w_{i_k} + \Delta)
$$
where $\Delta$ is a sum of monomials of lower order.
\end{lemma}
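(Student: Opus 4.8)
The plan is to induct on the length $k = \ell(I)$ of the admissible sequence. For the base case $k=1$ the formula $Sq^{i_1}(U) = U w_{i_1}$ is exactly the asserted statement with $\Delta = 0$. For the inductive step, write $I = (i_1, I')$ with $I' = (i_2,\ldots,i_k)$; admissibility of $I$ forces admissibility of $I'$ and, more importantly, the inequalities $i_1 \ge 2 i_2 \ge 2 i_s$ for every $s \ge 2$, so that $i_s \le i_1/2 < i_1$. By the inductive hypothesis $Sq^{I'}(U) = U(w_{i_2}\cdots w_{i_k} + \Delta')$, where every monomial of $\Delta'$ has the same degree $i_2 + \cdots + i_k$ as $w_{i_2}\cdots w_{i_k}$ and is of strictly lower order; since these monomials share a degree, ``lower order'' means precisely that the largest index occurring in each is at most $i_2$.

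Next I would apply $Sq^{i_1}$ and expand by the Cartan formula over the tensor factors, using $Sq^{a}(U) = U w_a$ (with $w_0 = 1$) to pull $U$ out front. Thus $Sq^I(U)/U$ is a sum of terms $w_{a_0}\,Sq^{a_2}(w_{j_2})\cdots Sq^{a_r}(w_{j_r})$ with $a_0 + a_2 + \cdots + a_r = i_1$, one such sum coming from $w_{i_2}\cdots w_{i_k}$ and one from each monomial of $\Delta'$. The Wu formula gives $Sq^{a}(w_j) = \sum_t \binom{j+t-a-1}{t} w_{a-t} w_{j+t}$, whose largest index is $j+a$, attained with coefficient $\binom{j-1}{a}$; the distinguished distribution $a_0 = i_1$ (all other $a_s = 0$) contributes exactly $w_{i_1} w_{i_2}\cdots w_{i_k}$, respectively $w_{i_1}M$ from a monomial $M$ of $\Delta'$.

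The heart of the argument is to show every other term is of strictly lower order, i.e.\ has all indices $< i_1$. For a term with some $a_s > 0$ we have $a_0 < i_1$, while each active factor contributes indices at most $j_s + a_s$. The vanishing $Sq^{a}(w_j) = 0$ for $a > j$ forces $a_s \le j_s \le i_2 \le i_1/2$, whence $j_s + a_s \le 2 j_s \le i_1$. The only way this bound could reach $i_1$ is $a_s = j_s$ with $2 j_s = i_1$; but then $Sq^{j_s}(w_{j_s}) = w_{j_s}^2$ has top index only $j_s < i_1$, because the coefficient $\binom{j_s - 1}{a_s}$ of the would-be top term $w_{j_s + a_s}$ vanishes when $a_s = j_s$. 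Hence every non-distinguished term has largest index strictly below $i_1$ and so is of lower order, while the distinguished term $w_{i_1}M$ coming from a monomial $M \ne w_{i_2}\cdots w_{i_k}$ of $\Delta'$ satisfies $w_{i_1}M < w_{i_1}w_{i_2}\cdots w_{i_k}$ since $M < w_{i_2}\cdots w_{i_k}$. Collecting terms, $Sq^I(U)/U = w_{i_1}w_{i_2}\cdots w_{i_k} + \Delta$ with $\Delta$ of lower order and the leading coefficient equal to $1$.

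The main obstacle is exactly the collision case $i_1 = 2 i_2$, where the crude index bound $j_s + a_s$ saturates at $i_1$; the resolution is the identity $Sq^{n}(w_n) = w_n^2$ (equivalently the vanishing of $\binom{j-1}{j}$), which is where admissibility ($i_1 \ge 2 i_2$) and the Wu formula interact decisively. Everything else is bookkeeping with the Cartan formula and the definition of the monomial order.
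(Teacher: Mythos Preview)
The paper does not supply its own proof of this lemma; it is stated with attribution to Thom's 1954 paper and followed only by a remark. Your argument by induction on $\ell(I)$, expanding $Sq^{i_1}$ via the Cartan formula and controlling the resulting Stiefel--Whitney monomials with the Wu formula, is correct and is essentially the classical argument Thom gives. The one delicate point---the borderline case $i_1 = 2i_2$ where the crude bound $j_s + a_s \le 2j_s \le i_1$ could saturate---you handle correctly by observing that saturation forces $a_s = j_s$, in which case $Sq^{j_s}(w_{j_s}) = w_{j_s}^2$ contributes only the index $j_s < i_1$. One small wording issue: you write that lower order ``means precisely that the largest index occurring in each is at most $i_2$,'' which overstates things (lower order is a lexicographic condition, not just a bound on the top index); but you only use the implication in one direction, so the argument is unaffected.
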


\begin{remark}
{The filtration is not a filtration of $\cA$-modules, but Thom's}
result allows us to distinguish between admissible $Sq^I$ using
the S-W classes in the Thom spectrum.
Because it was 1954, Thom worked in the stable range of $MO(n)$ where
his Thom class was $w_n$.
\end{remark}

\begin{proof}[Proof of Proposition \ref{basic}.]
It is enough to consider the case when $i_1 = b$.
When $a \ge 2b$, there is nothing to prove because $Sq^a Sq^I$
is already admissible.
When $2b > a \ge b$, this is no longer the case. 
If $Sq^a Sq^I$ is written in terms of admissible $Sq^J$, we
need to determine what the maximum possibility is for $j_1$.
We look at
\begin{eqnarray*}
Sq^a Sq^I(U) &=& Sq^a \Big( U ( w_{b} w_{i_2}\cdots w_{i_k} + \Delta)\Big) \\
&=& 
\sum_{j=0}^a Sq^{a-j}(U) Sq^j
( w_{b} w_{i_2}\cdots w_{i_k} + \Delta) \\
&=&
\sum_{j=0}^a Uw_{a-j} Sq^j ( w_{b} w_{i_2}\cdots w_{i_k} + \Delta).
\end{eqnarray*}
{Since $a-j < 2b$, }
 the largest possible new
S-W class is given by $Sq^j(w_b)$, but the largest this can be is $Sq^{b-1} (w_b)
= w_{2b-1}$ plus other terms with products.  Similarly, if $a=2b-1$, we
could get $w_{2b-1}$ when $j=0$ in the formula.  Not only is $n=2b-1$
the largest possible, but it is realized.

Using the same formula when $b > a > 0$, the largest possible $w_n$
is when $Sq^a w_b$ includes $w_{a+b}$ and that is only realized when $\binom{b-1}{a}=1$ (mod 2).
This concludes the proof.
\end{proof}

It is time to introduce one of our key tools, the Adem relations {(\cite{AdemRel})}:
$$
Sq^a Sq^b = \sum_{i}^{[a/2]} \binom{b-1-i}{a-2i} Sq^{a+b-i} Sq^i.
$$
These apply when $a < 2b$, that is,
when $Sq^a Sq^b$ is not admissible.  The resulting terms are admissible.
The sum is from the maximum of $0$ or $a-b+1$.

\begin{proof}[Proof of Proposition \ref{An}]
We induct on $m$.  Let $m=2^{k_1}+\cdots+2^{k_\ell}$ with $k_1 >  \cdots > k_\ell$
and $\ell \le n$.  Let $s = \min \{i : k_i > k_{i+1}+1\}$.  If no such $s$, let
$s = \ell$.  If $s = \ell$  and $k_\ell = 0$, then $m+1 = 2^\ell$, and we are
done since $Sq^{2^\ell} = 0$ in
$\cA/\!\!/\cA_n$.
Otherwise, write $m=2a+b$ with
\[
a=2^{k_1-1} + \cdots + 2^{k_s-1} \text{ and } 
b=2^{k_{s+1}} + \cdots + 2^{k_\ell}. 
\]
Note that if $s=\ell$, then $b=0$ and $m$ is even, having already done the odd case.
Then $Sq^{a+b+1} \in T_{a+b}$ by the induction hypothesis.
Proposition \ref{basic} says $Sq^a T_{a+b} \subset T_{2a+b}$, so
we have $Sq^a Sq^{a+b+1} \in T_{2a+b}$.
Since $\binom{a+b}{a} = 1$, ({This is because the binary expansion of
$a+b$ includes the binary expansion of $a$ in this case.}  We are always working mod 2), 
the Adem relation gives
\[
Sq^a Sq^{a+b+1} = \sum \binom{a+b-i}{a-2i} Sq^{2a+b+1-i} Sq^i
=Sq^{2a+b+1} + \Delta.
\]
Here $\Delta \in T_{2a+b}$ and so is the left hand side, so
$Sq^{m+1}=Sq^{2a+b+1} \in T_{2a+b}=T_m$.
\end{proof}

We now consider the obvious homomorphism from the span of admissible monomials
described in Theorem \ref{main} into
$\cA/\!\!/\cA_1$.
In Section \ref{bu}, we do the deduction for
$\cA/\!\!/E_1$.

\section{Injectivity}

We wish to show that the $Sq^I$ of Theorem \ref{main} are linearly
independent.
This follows directly from Lemma \ref{thom} once the background is set up.
For that we need the polynomial algebra from \cite{EThomas}
\[
H^*(BSpin) = P[w_i] \qquad i \ge 4  \qquad i \ne 2^n+1.
\]
We give a quick proof of this because it involves techniques we need anyway.

Mod decomposables, we have the following easily verified formulas:
\[
Sq^{2^k} (w_{2^k+1}) \equiv   w_{2^{k+1}+1}
\qquad
Sq^{(2^n,2^{n-1},\ldots,4,2,1)} (w_2) \equiv 
w_{2^{n+1}+1}.
\]
The second follows immediately from the first.  Because $w_2 = 0 \in H^*(BSpin)$
by definition, any Steenrod operations on it are zero as well.  
The formula tells us that 
$w_{2^{n+1}+1}$ is decomposable in $H^*(BSpin)$.  Most are non-trivial, but
we do have $w_3 = w_5 = w_9 = 0$.

We know $H^*(BSO) = P[w_i]$ with $i > 1$.
We have a fibration $BSpin \ra BSO \ra K_2 = K(\zt,2)$, where the last map is
given by $w_2$.
Note that $H^*(K_2)$ is a polynomial algebra on the
$Sq^{(2^n,2^{n-1},\ldots,4,2,1)} (\iota_2)$.
The above computation shows the map $H^*(K_2) \lra H^*(BSO)$
is an injection giving us a short exact sequence of Hopf algebras
$ H^*(K_2) \lra H^*(BSO) \lra H^*(BSpin)$ from the Eilenberg-Moore
(or Serre) spectral sequence.   
{The collapse of the EM-s.s. is because we are working with Hopf
algebras so the injection makes $H^*(BSO)$ free over $H^*(K_2)$.}
This gives $H^*(BSpin) $ and the
decomposability of the $w_{2^n+1}$.

We are also going to look at the Thom spectrum, $MSpin$.  
Let $U$ be the Thom class in $H^0(MSpin)$.
The reason we are looking at the Thom spectrum is because as a module over
the Steenrod algebra, $H^*(MSpin)$ is a sum of cyclic modules and the
module generated by $U$ is precisely
$\cA/\!\!/\cA_1$, \cite{ABP}.

\begin{proof}[Proof of injectivity for Theorem \ref{main}.]
If there were a relation in
$\cA/\!\!/\cA_1$
among the admissible $Sq^I$ with no $i_s = 2^n+1$ and $i_k \ge 4$, Lemma \ref{thom}
would imply a similar relations among the S-W classes in $H^*(MSpin)$.  But because
we are not using the $w_{2^n+1}$, these are linearly independent.
\end{proof}

\section{Surjectivity}

All that is left to do with our Theorem \ref{main} is to show that any
admissible $Sq^I$ with some $i_s = 2^n+1$ can be written in terms of
admissible $Sq^J$ with no $i_s =  2^t+1$.

We specialize Proposition \ref{basic} to $Sq^{2^n+j}T_{2^n} \subset T_{2^{n+1}}$
when $2^n > j \ge 0$.  Proposition \ref{basic} actually tells us $T_{2^{n+1}-1}$ but
we don't need that little extra bit.

\begin{lemma}
\label{stringk}
In   $\cA/\!\!/\cA_1$,
if $J=(2^{n+1}+1,i_0,\ldots,i_k)$ is admissible, then $Sq^J \in T_{2^{n+1}}$,
that is, $T_{2^{n+1}+1} \subset T_{2^{n+1}}$.
\end{lemma}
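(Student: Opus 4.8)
The plan is to show that any admissible $Sq^J$ with $j_1 = 2^{n+1}+1$ lies in $T_{2^{n+1}}$ after killing $\cA_1$, by peeling off the leading $Sq^{2^{n+1}+1}$ and re-expressing it via the Adem relations, exactly as was done for a single operation in the proof of Proposition \ref{An}. The key structural input is that $\cA_1$ is generated by $Sq^1$ and $Sq^2$, so $Sq^1 = 0$ in $\cA/\!\!/\cA_1$; hence any admissible monomial whose rightmost entry is $1$ is zero, and more usefully we will want to produce a $Sq^1$ at the \emph{left} end to annihilate troublesome terms. Write $2^{n+1}+1 = 2 \cdot 2^n + 1$, so in the notation of Proposition \ref{basic} we are looking at $Sq^{2^n} Sq^{2^n+1}$ (with $a = 2^n$, $b = 2^n+1$; note $a < b$ so the relation is non-admissible and $\binom{b-1}{a} = \binom{2^n}{2^n} = 1$).

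First I would handle the base step, i.e. the operation $Sq^{2^{n+1}+1}$ alone on the right of an admissible tail. By Proposition \ref{An} (with $m = 2^{n+1}$, which has $\alpha(m) = 1 \le n$ for $n \ge 1$), or directly by the argument in its proof, $Sq^{2^{n+1}+1} \in T_{2^{n+1}}$ in $\cA/\!\!/\cA_1$. So the issue is only to propagate this past the admissibility constraint imposed by the tail $(i_0,\dots,i_k)$ with $i_0 \le 2^n$ (forced by admissibility of $J$). I would argue by downward manipulation: since $Sq^{2^{n+1}+1} Sq^{i_0}\cdots$ — wait, that is already the given order. The cleaner route is: apply the Adem relation to $Sq^{2^n} Sq^{2^n+1}$ to get $Sq^{2^{n+1}+1} = Sq^{2^n}Sq^{2^n+1} + (\text{admissibles of smaller leading degree})$, more precisely $Sq^{2^n}Sq^{2^n+1} = \sum_i \binom{2^n - i}{2^n - 2i} Sq^{2^{n+1}+1-i}Sq^i = Sq^{2^{n+1}+1} + \Delta$ with $\Delta \in T_{2^{n+1}}$. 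Thus $Sq^{2^{n+1}+1} = Sq^{2^n} Sq^{2^n+1} + \Delta'$ with $\Delta' \in T_{2^{n+1}}$. Now $Sq^{2^n+1}$ is itself, by Proposition \ref{An} applied to $m = 2^n$ (valid since $\alpha(2^n) = 1 \le n$), in $T_{2^n}$ — but we need to be careful, this uses $n \ge 1$; for $n = 0$ the statement $T_3 \subset T_2$ must be checked separately, though actually $2^{n+1}+1$ with $n=0$ is $3$, and $Sq^3 = Sq^1 Sq^2$ in $\cA$, hence $= 0$ in $\cA/\!\!/\cA_1$, so that case is trivial. For $n \ge 1$: $Sq^{2^n+1} \in T_{2^n}$, so $Sq^{2^n}Sq^{2^n+1} \in Sq^{2^n} T_{2^n} \subset T_{2^{n+1}}$ by the specialization of Proposition \ref{basic} noted just before the lemma ($Sq^{2^n + j}T_{2^n} \subset T_{2^{n+1}}$ with $j = 0$). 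Therefore $Sq^{2^{n+1}+1} \in T_{2^{n+1}}$.

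To get the full statement $Sq^J \in T_{2^{n+1}}$ for $J = (2^{n+1}+1, i_0, \dots, i_k)$ admissible, I would combine the above with an inductive bookkeeping over the tail length: write $Sq^J = Sq^{2^{n+1}+1} \cdot Sq^{i_0}\cdots Sq^{i_k}$, substitute $Sq^{2^{n+1}+1} = Sq^{2^n} Sq^{2^n+1} + \Delta'$ from the previous paragraph (all computations now in $\cA/\!\!/\cA_1$), and observe that $\Delta' \cdot Sq^{i_0}\cdots Sq^{i_k}$, being a sum of admissibles with leading entry $\le 2^{n+1}$ times an admissible tail, re-expands into admissibles with leading entry $\le 2^{n+1}$ by the general fact $Sq^a T_b \subset T_{a+b}$ — no, that is not enough since $a+b$ could exceed $2^{n+1}$. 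The correct tool is again Proposition \ref{basic}: for a leading $Sq^c$ with $c \le 2^{n+1}$ (so $c < 2 i_0'$ is not automatic)... the point is that each such $\Delta'$ term has leading entry $c \le 2^{n+1}$, and what we actually need is that $T_{2^{n+1}+1} \subset T_{2^{n+1}}$ \emph{as subspaces of $\cA/\!\!/\cA_1$}, which is the very statement being proved, so the argument should be phrased as a single induction on degree: assume $T_{d} \cap (\text{degree} < |J|)$ already satisfies the conclusion, reduce $Sq^J$ to $Sq^{2^n}Sq^{2^n+1}Sq^{i_0}\cdots$ plus strictly-lower-degree or strictly-lower-leading-index terms, handle the main term by $Sq^{2^n+1}Sq^{i_0}\cdots \in T_{2^n}$ (admissibility: $2^n+1 \ge 2i_0$ since $i_0 \le 2^n$ forces $2i_0 \le 2^{n+1}$, hmm need $2i_0 \le 2^n+1$, i.e. $i_0 \le 2^{n-1}$ — NOT always true), so in general one must first Adem-reduce $Sq^{2^n+1}Sq^{i_0}$, landing in $T_{\le 2^n + i_0}$, then hit with $Sq^{2^n}$. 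The main obstacle, and the step I expect to require the most care, is precisely this propagation through the admissible tail: controlling that after introducing $Sq^{2^n}$ on the left of something in $T_{2^n}$ we stay in $T_{2^{n+1}}$, and that the error terms $\Delta'$ (which live in $T_{2^{n+1}}$ by degree-induction) do not spoil the bound. I would organize it as a double induction — outer on the degree $|J|$, inner on $\ell(J)$ — mirroring the proof of Proposition \ref{An}, using the Adem relation to create the factorization $2^{n+1}+1 = 2\cdot 2^n + 1$ and the specialized containment $Sq^{2^n}T_{2^n}\subset T_{2^{n+1}}$ as the engine, and invoking $Sq^1 = 0$ in $\cA/\!\!/\cA_1$ wherever a stray unit index appears.
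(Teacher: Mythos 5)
Your base case ($J=(2^{n+1}+1)$ with empty tail) is correct and matches the paper's, which simply cites Proposition \ref{An}. But the heart of the lemma is the propagation past a nonempty admissible tail, and there your argument stops exactly where the real work begins: you identify the obstruction correctly but do not overcome it. Concretely, after substituting $Sq^{2^{n+1}+1}=Sq^{2^n}Sq^{2^n+1}+\Delta'$, the term $Sq^{2^n}\bigl(Sq^{2^n+1}Sq^{i_0}\cdots Sq^{i_k}\bigr)$ is not controlled by Proposition \ref{basic}: since $i_0$ may be as large as $2^n$, the inner factor is only known to lie in $T_{2^{n+1}-1}$, and $Sq^{2^n}T_{2^{n+1}-1}\subset T_{3\cdot 2^n-1}$, which overshoots $T_{2^{n+1}}$. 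The error term $\Delta'\,Sq^{i_0}\cdots Sq^{i_k}$ has the same defect: the second entries $Sq^s$ of the admissibles in $\Delta'$ are generally smaller than $2i_0$, so re-expansion against the tail can push the leading entry above $2^{n+1}$. Your proposed outer induction on degree cannot rescue this, because all of these manipulations take place in the single degree $|J|$; there are no lower-degree terms to induct on.

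The paper's resolution is a device your sketch does not reach: instead of peeling $Sq^{2^{n+1}+1}$ off the left of $Sq^J$, it builds the telescoping product
\[
Sq^{2^n+i_0}\,Sq^{2^{n-1}+i_1}\cdots Sq^{2^{n-k}+i_k}\,Sq^{2^{n-k}+1},
\]
shown by induction on $k$ (Induction \ref{intermed}) to equal $Sq^{2^{n+1}+1}Sq^{i_0}\cdots Sq^{i_k}+\Delta_{n+1}$ with $\Delta_{n+1}\in T_{2^{n+1}}$, while itself lying in $T_{2^{n+1}}$ because at each stage one applies $Sq^{2^n+i_0}$ to something already known to lie in $T_{2^n}$ and invokes $Sq^{2^n+i_0}T_{2^n}\subset T_{2^{n+1}}$. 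Only one Adem relation, that for $Sq^{2^n+i_0}Sq^{2^n+1}$, is ever expanded, and its off-leading terms are already admissible against the tail because $s>i_0\ge 2i_1$. Finding that product is the missing idea --- the authors themselves credit a computer search for it --- and without it your outline does not close.
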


\begin{proof}[Proof of Theorem \ref{main} for $H^*(bo)$ from Lemma \ref{stringk}.]
{
If we have an $I$ admissible with some $i_s = 2^n+1$ with $n \ge 1$, 
we want to show that $Sq^I$ can be replaced without this $i_s=2^n+1$.
Write $I=LJ$ where $J$ is the shortest possible  as in Lemma \ref{stringk}.
Lemma \ref{stringk} tells us that 
$Sq^J$ can be written in terms of $Sq^K$ admissible with $k_1 < 2^{n+1}+1$. When
this sum replaces $Sq^J$ in $Sq^I$, $LK$ is still admissible.  
}
By induction, 
we do not have to worry
about smaller $J$ like this showing up.
Since $I$ is finite, this process of replacement is also finite.
We have shown that every $Sq^I$, $I$ admissible, can be replaced with one of the
desired form, and we have shown that the $Sq^I$ of this form are linearly independent.
This concludes the proof of Theorem \ref{main} from Lemma \ref{stringk}.
\end{proof}

\begin{proof}[Proof of Lemma \ref{stringk}.]
When $J=(2^{n+1}+1)$, we can use the 
$\cA/\!\!/\cA_1$
case
of Proposition \ref{An}.

To start our induction on $k$, we need the $J=(2^{n+1}+1,i_0)$ case.
We begin with $Sq^{2^n+1} = \Delta \in T_{2^n}$ from Proposition \ref{An}
and apply $Sq^{2^n+i_0}$. 
For $J$ to be admissible, we have $i_0 \le 2^n$.  
From Proposition \ref{basic},
$Sq^{2^n+i_0} T_{2^n} \subset T_{2^{n+1}}$ so $Sq^{2^n+i_0}Sq^{2^n+1} \in T_{2^{n+1}}$
since $Sq^{2^n+i_0}\Delta \in T_{2^{n+1}}$.  
All we need now is:
\begin{eqnarray*}
Sq^{2^n+i_0} Sq^{2^n+1} &=& \sum_{s \ge i_0} 
\binom{2^n-s}{2^n+i_0-2s} Sq^{2^{n+1} +1 +i_0 -s} Sq^s\\
&=& Sq^{2^{n+1} +1} Sq^{i_0} + \sum_{s > i_0} 
\binom{2^n-s}{2^n+i_0-2s} Sq^{2^{n+1} +1 +i_0 -s} Sq^s.
\end{eqnarray*}
The terms in the sum are all also in $T_{2^{n+1}}$ so the same is true for
$ Sq^{2^{n+1} +1} Sq^{i_0}$.

The following induction proves our Lemma \ref{stringk} because the two terms
in $T_{2^{n+1}}$ force the third term to be there as well.
The induction is started above as it can be rephrased in 
the format of
our induction below as the $k=0$ case.

\begin{induction} In   $\cA/\!\!/\cA_1$,
\label{intermed}
with
$(2^{n+1}+1,i_0,\ldots,i_k)$ admissible,
$$
Sq^{2^n+i_0} Sq^{2^{n-1} +i_1} \cdots Sq^{2^{n-k}+i_k} Sq^{2^{n-k}+1} \in T_{2^{n+1}}
$$
and is equal in
$\cA/\!\!/\cA_1$
 to
$$
Sq^{2^{n+1}+1}Sq^{i_0}Sq^{i_1}\cdots Sq^{i_k} + \Delta_{n+1}
\text{ with } \Delta_{n+1} \in T_{2^{n+1}}.
$$
\end{induction}

\begin{proof}[Proof of our Induction \ref{intermed}]

By induction on $k$, we can write
$$
Sq^{2^{n-1} +i_1} \cdots Sq^{2^{n-k}+i_k} Sq^{2^{n-k}+1} 
\in T_{2^n}
$$
and it is equal to
$$
Sq^{2^{n}+1}Sq^{i_1} \cdots Sq^{i_k}
+
\Delta_n
\text{ with } \Delta_n \in T_{2^n}.
$$

Now we take $Sq^{2^n+i_0}$ times everything.
Since $Sq^{2^n +i_0} T_{2^n} \subset T_{2^{n+1}}$, we have
$Sq^{2^n+i_0}\Delta_n = \Delta_{n+1} \in T_{2^{n+1}}$ and
$$
Sq^{2^n+i_0} Sq^{2^{n-1} +i_1} \cdots Sq^{2^{n-k}+i_k} Sq^{2^{n-k}+1} 
\in T_{2^{n+1}}
$$
and is equal in 
$\cA/\!\!/\cA_1$
to
$$
Sq^{2^n+i_0}Sq^{2^{n}+1}Sq^{i_1}\cdots Sq^{i_k} + \Delta_{n+1}.
$$
{So, the term
\[
Sq^{2^n+i_0} Sq^{2^{n}+1}
Sq^{i_1} \cdots Sq^{i_k} 
\]
is also in $T_{2^{n+1}}$. It is equal to}
\begin{eqnarray*}
&&\\
&=&\Big( \sum_{s \ge i_0} \binom{2^n -s}{2^n+i_0 -2s} Sq^{2^{n+1}+1+i_0 -s}Sq^s\Big)
Sq^{i_1} \cdots Sq^{i_k} \\
&&\\
&= &
Sq^{2^{n+1}+1}Sq^{i_0} 
Sq^{i_1} \cdots Sq^{i_k} \\
&&\\
&&+
\Big( \sum_{s > i_0} \binom{2^n -s}{2^n+i_0 -2s} Sq^{2^{n+1}+1+i_0 -s}Sq^s\Big)
Sq^{i_1} \cdots Sq^{i_k}.
\end{eqnarray*}
Since $s > i_0$, the elements in the sum are admissible and in $T_{2^{n+1}}$.
{They can now be incorporated into $\Delta_{n+1}$.
We are left with $Sq^J =
Sq^{2^{n+1}+1}Sq^{i_0} Sq^{i_1} \cdots Sq^{i_k} $ from Lemma \ref{stringk}
which is therefore also in $T_{2^{n+1}}$.
}
\end{proof}

Lemma \ref{stringk} follows. 
\end{proof}

Although we don't need this next Lemma, it is interesting
in its own right.
Let $E_r$ be spanned by all $Sq^I$, $I$ admissible, $e(I) \le r$.
Let $K(\zt,r) = K_r$ be the Eilenberg-MacLane space with $\iota_r \in H^r(K_r)$ the
fundamental class.
The significance of excess is that the $Sq^I \iota_r$ with $Sq^I \in E_r$ are 
linearly independent in $H^*(K_r)$
and $Sq^I \iota_r = 0$ for $e(I) > r$.

\begin{lemma}
\label{excess}
In   $\cA/\!\!/\cA_1$,
if $J=(2^{n+1}+1,i_0,\ldots,i_k)$ is admissible, then $Sq^J \in E_{e(J)-2}$.
\end{lemma}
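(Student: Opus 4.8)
The plan is to run essentially the same induction that proved Lemma~\ref{stringk} and Induction~\ref{intermed}, but now keeping careful track of the excess of every admissible $Sq^K$ that appears. So I would rephrase Induction~\ref{intermed} as a statement about excess: with $(2^{n+1}+1,i_0,\ldots,i_k)$ admissible, when we write
\[
Sq^{2^n+i_0}\cdots Sq^{2^{n-k}+i_k}Sq^{2^{n-k}+1}
= Sq^{2^{n+1}+1}Sq^{i_0}\cdots Sq^{i_k} + \Delta_{n+1}
\]
in $\cA/\!\!/\cA_1$, every admissible monomial $Sq^K$ occurring in $\Delta_{n+1}$ has $e(K)\le e(J)-2$, where $J=(2^{n+1}+1,i_0,\ldots,i_k)$. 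Since the main term $Sq^{2^{n+1}+1}Sq^{i_0}\cdots Sq^{i_k}$ equals $Sq^J$, and $Sq^J$ is forced into $T_{2^{n+1}}$, rewriting it in the admissible basis produces only monomials from $\Delta_{n+1}$ together with monomials of strictly smaller first entry; the excess bound then gives $Sq^J\in E_{e(J)-2}$.

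The key steps, in order: First handle the base case $J=(2^{n+1}+1)$, where $Sq^{2^{n+1}+1}=\Delta\in T_{2^n}$ from Proposition~\ref{An}; here $e(J)=2^{n+1}+1$, and I need the admissible $Sq^K$ in $\Delta$ to have $e(K)\le 2^{n+1}-1$. This should follow from tracking the Adem relations used in the proof of Proposition~\ref{An} — each Adem relation $Sq^aSq^b=\sum_i\binom{b-1-i}{a-2i}Sq^{a+b-i}Sq^i$ replaces a length-two string by admissibles of first entry $a+b-i$ and second entry $i$, and since $i\ge\max(0,a-b+1)$ one checks the excess $(a+b-i)-i-(\text{tail})$ drops by at least $2$ relative to the excess of the original string. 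Second, the inductive step: assuming the excess statement for $(2^n+1,i_1,\ldots,i_k)$, apply $Sq^{2^n+i_0}$ and use both Proposition~\ref{basic} (to land in $T_{2^{n+1}}$) and the bookkeeping that $Sq^{2^n+i_0}$ applied to an admissible $Sq^K$ in the Serre--Cartan basis raises excess by at most $2^n+i_0$ minus twice the amount it must be "pushed past" — more precisely, combine the excess of the $\Delta_n$ terms (which is $\le e(2^n+1,i_1,\ldots,i_k)-2$ by induction, and note $e(2^{n+1}+1,i_0,\ldots,i_k)=e(2^n+1,i_1,\ldots,i_k)+2^n+i_0$) with the excess behavior under left multiplication by $Sq^{2^n+i_0}$. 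Third, analyze the explicit Adem sum $\sum_{s>i_0}\binom{2^n-s}{2^n+i_0-2s}Sq^{2^{n+1}+1+i_0-s}Sq^s$ times $Sq^{i_1}\cdots Sq^{i_k}$: for each term, $e=(2^{n+1}+1+i_0-s)-s-i_1-\cdots-i_k=e(J)-2s+2i_0-\cdots$, wait — since $s>i_0$ we get $e(J)-2(s-i_0)-(\text{correction from straightening }Sq^sSq^{i_1}\cdots)$, and $s-i_0\ge 1$ forces the drop by at least $2$; I also need that straightening the tail $Sq^sSq^{i_1}\cdots Sq^{i_k}$ into admissibles does not increase excess, which is the standard fact that Adem relations are excess non-increasing (each $\binom{b-1-i}{a-2i}Sq^{a+b-i}Sq^i$ has excess $a+b-2i\le a+b-2\max(0,a-b+1)\le b$, compared with $a+b$ for... hmm, actually against the bound $b$ for the second slot — this needs care).

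The main obstacle I expect is precisely the last point: controlling the excess when the tail string $Sq^sSq^{i_1}\cdots Sq^{i_k}$ (or more generally the whole monomial after left multiplication by $Sq^{2^n+i_0}$) is straightened into the admissible basis. Unlike the first-entry filtration $T_b$, which Proposition~\ref{basic} handles cleanly, the excess after straightening a long non-admissible string is not immediately transparent from the Adem relations one step at a time, because successive straightening moves interact. The cleanest fix is probably to prove a small auxiliary lemma: \emph{if $Sq^a Sq^M$ with $M$ admissible is rewritten in the admissible basis as $\sum c_K Sq^K$, then every $K$ with $c_K\ne 0$ satisfies $e(K)\le e(aM)$ where $e(aM):=a-|M|+e(M)$ when $a\ge i_1(M)$ is already admissible, and $e(aM):=$ (the natural thing) otherwise} — i.e., that admissible straightening never increases the "formal excess" $i_1-i_2-\cdots$ of a string. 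This is a known and elementary property of the Steenrod algebra (it underlies why $E_r$ is a sub-coalgebra relevant to $H^*(K_r)$), and once it is in hand, the excess count propagates through the induction exactly parallel to the $T_b$ count, yielding $e(J)-2$ rather than $e(J)$ because of the two places where the index is forced to strictly increase (the $s>i_0$ in the Adem sum, contributing the first $-1$, and the analogous strict inequality one level down, contributing the second $-1$ — this is where the "$-2$" comes from, matching the two $+1$'s in passing from $2^n+1$ to $2^{n+1}+1$).
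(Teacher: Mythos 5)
Your proposal misses the observation that makes this lemma a two-line corollary of Lemma \ref{stringk}: for an admissible $K=(k_1,\ldots,k_m)$, the excess is determined by the leading entry and the total degree via $e(K)=k_1-(|K|-k_1)=2k_1-|K|$. Lemma \ref{stringk} already says that in $\cA/\!\!/\cA_1$ we may write $Sq^J=\sum Sq^K$ with each $K$ admissible and $k_1\le 2^{n+1}$, and of course $|K|=|J|$, so
\[
e(K)=2k_1-|K|\le 2\cdot 2^{n+1}-|J|=2(2^{n+1}+1)-|J|-2=e(J)-2.
\]
That is the paper's entire proof. No new induction and no tracking of excess through the Adem relations is needed: once the degree is fixed, a bound on the first entry \emph{is} a bound on the excess.

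Beyond being far more laborious than necessary, your plan has a genuine gap at exactly the point you flag. The auxiliary lemma you propose --- that rewriting $Sq^aSq^M$ in the admissible basis never increases the ``formal excess'' $i_1-i_2-\cdots$ of the string --- is false as stated for non-admissible strings: $Sq^1Sq^2=Sq^3$ has formal excess $-1$ on the left and excess $3$ on the right, and in general a single Adem relation sends $Sq^aSq^b$ (formal excess $a-b$) to terms $Sq^{a+b-i}Sq^i$ of excess $a+b-2i\ge b>a-b$ whenever $a<2b$. So the excess of the admissible output is \emph{not} controlled by the formal excess of the input; it is controlled by the leading entry and the degree, which is precisely the identity above and which collapses your entire bookkeeping scheme into one line. (Your formula $e(aM)=a-|M|+e(M)$ for the admissible case is also off; for $(a,M)$ admissible the excess is $a-|M|$.)
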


\begin{proof}
In   $\cA/\!\!/\cA_1$, write $Sq^J = \sum Sq^K$ with $K$ admissible and,
from Lemma \ref{stringk}, $k_1 \le 2^{n+1}$.
If $Sq^J = 0$, there is nothing to prove.  We have 
$|J|=|K|$.    
For $I$ admissible,
recall $|I|= i_1 +\cdots {+} i_k$ and $e(I)=i_1-i_2 {-} \cdots -i_k$.
We can connect them with $2i_1 -|I| = e(I)$. Now
\[
e(K) = 2k_1 -|K| = 2k_1 - |J| \le 2^{n+2} -|J| = 2(2^{n+1}+1) -|J| -2 = e(J) -2.
\]
\end{proof}

\section{$H^*(bu)$}
\label{bu}

We give a quick derivation of $H^*(bu)$ from $H^*(bo)$.
We have the standard fibration
\[
bo \lra bu \lra \Sigma^2 bo.
\]
This gives a short exact sequence of $\cA$ modules.  The map from
$H^*(\Sigma^2 bo)$ takes $1$ to $Sq^2$ and is injective so must hit
all $Sq^I Sq^2$ with $Sq^I$ a basis for $H^*(bo)$.  The surjection
$H^*(bu) \lra H^*(bo)$ must hit the $Sq^I$ for a basis for $H^*(bo)$.
This is the stated answer for $H^*(bu)$ in Theorem \ref{main}.


\begin{thebibliography}{Thomas62}

{\bibitem[Ade52]{AdemRel}
{\scshape Adem,~J.}
\newblock The iteration of the {S}teenrod squares in algebraic topology.
\newblock {\em Proceedings of the National Academy of Sciences U.S.A.},
  {\bf 38}:720--726, (1952).} \mrev{ 0057540},\zbl{0053.43404}.


\bibitem[ABP67]{ABP}
{\scshape Anderson,~D.W; Brown,~E.H~Jr.; Peterson,~F.P.}
\newblock The structure of the spin cobordism ring.
\newblock {\em Annals of Mathematics}, {\bf 86}:271--298, (1967).
\mrev{ 0219077},\zbl{0156.21605}.

{\bibitem[Car55]{Cartan}
{\scshape Cartan,~H.}
\newblock D\'{e}termination des alg\`{e}bres ${H}_*(\pi,n;{Z}_2)$ et
  ${H}^*(\pi,n;{Z}_2)$; groupes stables modulo $p$.
\newblock In {\em S\'eminaire H. Cartan}, volume {\bf 7}(1), pages 1--8. E.N.S.,
  (1954-1955).}


\bibitem[Mil58]{MilA}
{\scshape Milnor,~J.W.}
\newblock The {Steenrod} algebra and its dual.
\newblock {\em Annals of Mathematics}, {\bf 67}:150--171, (1958).
\mrev{ 0099653},\zbl{0080.38003}.

\bibitem[Mor07]{Dena}
{\scshape Morton,~D.S.~Cowen.}
\newblock The {H}opf ring for $bo$ and its connective covers.
\newblock {\em Journal of Pure and Applied Algebra}, {\bf 210}:219--247, (2007).
\mrev{ 2311183},\zbl{1115.55001}.

\bibitem[MS74]{MilSta}
{\scshape Milnor,~J.W.; Stasheff,~J.D.}
\newblock {\em Characteristic Classes}, volume {\bf 76} of {\em Annals of Mathematics
  Studies}.
\newblock Princeton University Press, Princeton, (1974).

{\bibitem[Ser53]{SerreEM}
{\scshape Serre,~J.-P.}
\newblock Cohomologie modulo 2 des complexes d'{E}ilenberg-{M}aclane.
\newblock {\em Commentarii Mathematici Helvetici}, {\bf 27}(1):198--232, (1953).}
\mrev{ 0060234}, \zbl{0052.19501}.


\bibitem[Sto63]{Stong}
{\scshape Stong,~R.E.}
\newblock Determination of ${H}^*({BO}(k,\cdots,\infty),{Z}/(2))$ and
  ${H}^*({BU}(k,\cdots,\infty),{Z}/(2))$.
\newblock {\em Transactions of the American Mathematical Society},
 {\bf 107}:526--544, (1963).
\mrev{ 0151963}, \zbl{0116.14702}.

\bibitem[Thom54]{Thom}
{\scshape Thom,~R.}
\newblock Quelques propri\'et\'es globales des vari\'et\'es differentiables.
\newblock {\em Commentarii Mathematici Helvetici}, {\bf 28}:17--86, (1954).
\mrev{ 0061823}, \zbl{0057.15502}.

\bibitem[Thomas62]{EThomas}
{\scshape Thomas,~E.}
\newblock On the cohomology groups of the classifying space for the stable
  spinor group.
\newblock {\em Bol. Soc. Mat. Mex.}, pages 47--69, (1962).
\mrev{ 0153027}, \zbl{0124.16401}.

{\bibitem[Wu53]{WuRel}
{\scshape Wu,~W.}
\newblock On squares in {G}rassmannian manifolds.
\newblock {\em Acta Sci. Sinica}, {\bf 2}:91--115, (1953).}
\mrev{ 0066650}, \zbl{0051.14101}.


\end{thebibliography}

\end{document}